\newtheorem{thm}{Theorem}[section]
\newtheorem{lem}[thm]{Lemma}
\newtheorem{cor}[thm]{Corollary}
\newtheorem{con}{Conjecture}
\theoremstyle{definition}
\newtheorem{defin}[thm]{Definition}
\newtheorem{exam}[thm]{Example}
\theoremstyle{remark}
\newtheorem*{rem}{Remark}
\newcommand { \ib }[1] {\textit{\textbf{#1}}}
\newcommand { \lin }{\mathop{\rm{lin}}\nolimits}
\begin{document}
\renewcommand{\ib}{\mathbf}
\renewcommand{\proofname}{Proof}
\renewcommand{\phi}{\varphi}
\makeatletter \headsep 10 mm \footskip 10 mm

\title[]{The Voronoi conjecture for parallelohedra with simply connected $\delta$-surface}
\author{A.~Garber}
\author{A.~Gavrilyuk}
\author{A.~Magazinov}

\address{The University of Texas at Brownsville, Department of Mathematics, One University blvd., Brownsville, TX, 78520, USA.}
\email{alexeygarber@gmail.com}

\address{Steklov Mathematical Institute, 8 Gubkina str., Moscow, 119991, Russia, and Center for Optical Neural Technologies, Scientific-Research Institute for System Analysis, RAS, Vavilova str., 44, korp. 2, Moscow 119333, Russia.}
\email{agavrilyuk.research@gmail.com}

\address{Steklov Mathematical Institute, 8 Gubkina str., Moscow, 119991, Russia.}
\email{magazinov-al@yandex.ru}

\date{\today}

\begin{abstract}
We show that the Voronoi conjecture is true for parallelohedra with simply connected $\delta$-surface. Namely, we show that if the boundary of parallelohedron $P$ remains simply connected after removing closed non-primitive faces of codimension 2, then $P$ is affinely equivalent to a Dirichlet-Voronoi domain of some lattice. Also we construct the $\pi$-surface associated with a parallelohedron and give another condition in terms of homology group of the constructed surface. Every parallelohedron with simply connected $\delta$-surface also satisfies the condition on homology group of the $\pi$-surface.
\end{abstract}

\maketitle

\section{The Voronoi conjecture}

\begin{defin}
A $d$-dimensional polytope $P$ is called a {\it parallelohedron} if $\mathbb{R}^d$ can be tiled by non-overlapping translates of $P.$ 
\end{defin}
  
A tiling by parallelohedra is called {\it face-to-face} if the intersection of any two copies of $P$ is a face of both, and is called a {\it non face-to-face} otherwise. 
B.~Venkov \cite{Ven} and later independently P.~McMullen \cite{McM} proved that if there is a non face-to-face tiling by $P$, then there is also a face-to-face tiling. 
It is clear that the face-to-face tiling by $P$ is unique up to translation.
We will denote this tiling as $\mathcal{T}(P)$ or just $\mathcal{T}$ when the generating polytope of the tiling is obvious.


In 1897 H.~ Minkowski \cite{Min} proved that every parallelohedron $P$ is centrally symmetric, and all facets of $P$ are centrally symmetric. Later Venkov \cite{Ven} added the third necessary condition, he proved that the projection of $P$ along any face of codimension $2$ is a two-dimensional parallelohedron, i.e., a parallelogram or a centrally symmetric hexagon. Also Venkov proved that these three conditions are sufficient for a convex polytope to be a parallelohedron. In 1980 McMullen \cite{McM} gave an independent proof that these three conditions are necessary and sufficient, see also \cite{McM2} for acknowledgment of priority.



The centers of all tiles of $\mathcal{T}(P)$ form a $d$-dimension lattice $\Lambda(P)$. If the fundamental domain of $\Lambda(P)$ has 
volume 1, then the homothetic polytope $2P$ is centrally symmetric with respect to the origin, has volume $2^d$, and contains no lattice points in the interior. This, by definition, means that $2P$ is an {\it extremal body}. Additional information about extremal bodies can be found in \cite[Ch.2 \S 12]{GL}.

On the other hand, given a $d$-dimensional lattice $\Lambda$, the Dirichlet-Voronoi polytope $P(\Lambda)$ is a parallelohedron; the Dirichlet-Voronoi polytope is the set of points  that are closer to a given lattice point $O$ than to any other point of $\Lambda$. 

\begin{con}[G.~Voronoi, \cite{Vor}]\label{vorcon}
Any $d$-dimensional parallelohedron $P$ is affinely equivalent to a Dirichlet-Voronoi polytope $P(\Lambda')$ for some $d$-dimensional lattice $\Lambda'$.
\end{con}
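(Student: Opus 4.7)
The plan is to attack the conjecture via Voronoi's classical \emph{canonical scaling} framework. For each facet $F$ of $P$ let $e_F$ be the facet vector, i.e.\ the translation that carries $P$ to the tile of $\mathcal{T}(P)$ sharing $F$ with $P$. The conjecture is equivalent to finding positive real numbers $n_F>0$, one per facet, with $n_F=n_{-F}$ on opposite facets, such that around every codimension-$2$ face of $P$ the scaled vectors $n_F\,e_F$ satisfy the closing relation characterizing the facets of a Dirichlet--Voronoi cell. Once such a consistent system $\{n_F\}$ is produced, an explicit affine map sends $P$ to $P(\Lambda')$ for the lattice $\Lambda'$ generated by the rescaled facet vectors.

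First I would exploit the local rigidity given by Minkowski's and Venkov's theorems recalled above: the projection of $P$ along any codimension-$2$ face $G$ is either a parallelogram (primitive case, four facets meeting at $G$) or a centrally symmetric hexagon (non-primitive case, six facets). Around a primitive $G$ the closing relation imposes no constraint on the ratios $n_F/n_{F'}$ for facets adjacent across $G$, whereas around a non-primitive $G$ it fixes a single scalar \emph{gain} $g(F,F')\in\mathbb{R}_{>0}$ by which $n_{F'}$ must differ from $n_F$. Thus $\log n_F$ behaves like a potential, and its existence amounts to the vanishing of a $1$-cocycle on the $2$-complex whose $2$-cells are facets of $P$, whose $1$-cells are non-primitive codimension-$2$ faces, and whose $0$-cells are codimension-$3$ faces. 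The local cocycle relation at each codimension-$3$ face is a consequence of the Minkowski relations assembled from the hexagonal codim-$2$ faces incident to it, so the obstruction is purely topological.

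The strategy therefore splits into two ingredients. Ingredient (a) is unconditional: reduce Voronoi's conjecture to the statement that every closed loop on $\partial P$, traversing only facets and crossing only primitive codim-$2$ faces, supports a trivial gain. This reduction uses only centrality of facets, the parallelogram/hexagon dichotomy, and McMullen's passage from arbitrary to face-to-face tilings. Ingredient (b), which is the main obstacle, is the topological input that every such loop is null-homotopic on the corresponding subsurface of $\partial P$, i.e.\ that the appropriately defined $\delta$-surface has trivial $H_1$.

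The hard part is ingredient (b) in full generality. There is no a priori reason that deleting all non-primitive codim-$2$ faces from $\partial P$ leaves a simply connected surface, and this is precisely where the conjecture has resisted general proof. My proposal is therefore to establish (a) unconditionally and to \emph{strengthen} (b) by replacing simple connectivity with a weaker cycle-vanishing condition: it suffices that every loop of the above type can be written as a combination of \emph{elementary cycles} around individual codimension-$3$ faces, since those already support trivial gain by the local analysis. Verifying that an arbitrary parallelohedron admits such a decomposition remains the genuine obstacle; any proof of the unconditional conjecture must therefore either produce this decomposition geometrically from Minkowski--Venkov data or supply a new invariant of $\partial P$ forcing the relevant first homology to vanish.
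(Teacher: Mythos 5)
You are addressing a statement that the paper itself records only as a conjecture: the paper contains no proof of Conjecture \ref{vorcon} in full generality, and neither does your proposal --- as you acknowledge in your last paragraph. What you outline is, almost step for step, the paper's machinery for its \emph{conditional} results: the equivalence between the Voronoi conjecture for $P$ and the existence of a canonical scaling, the reduction to a scaling defined on the facets of a single copy of $P$ (Lemma \ref{loccansc}), the reformulation as triviality of the gain function on all generic cycles avoiding the parallelogram-type codimension-$2$ faces (Lemma \ref{iffcond}), the local triviality of the gain around suitable codimension-$3$ faces (Lemma \ref{local}, proved via Delone's classification of dual $3$-cells and an explicit auxiliary tetrahedron/octahedron/pyramid construction --- this step needs a real argument, not just an assembly of Minkowski relations, and it only applies when \emph{every} codimension-$2$ face through the codimension-$3$ face is primitive), and finally homotopy invariance of the gain (Lemma \ref{hom}). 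Your ingredient (b) is then exactly the hypothesis of Theorems \ref{simpconn} and \ref{vorgener}: the paper \emph{assumes} that $\pi_1$ or $H_1$ of the $\delta$-surface is trivial, or that it is generated by cycles of known trivial gain; it does not derive this from the Minkowski--Venkov data. So the genuine gap is the one you name yourself: nothing in your proposal establishes the required cycle decomposition for an arbitrary parallelohedron, and without it the statement is not proved. As a proof of Conjecture \ref{vorcon} the proposal therefore fails; as a research plan it reproduces the paper's route and its conditional theorems rather than extending them.

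Two corrections of detail. First, you have inverted the paper's terminology: here a codimension-$2$ face is \emph{primitive} when the projection of $P$ along it is a hexagon (three tiles meet), and \emph{non-primitive} when the projection is a parallelogram (four tiles). It is the hexagonal (primitive) faces that fix the gain through the unique linear dependence of the three facet normals, while the parallelogram (non-primitive) ones impose no ratio constraint and are precisely the faces deleted to form the $\delta$-surface; your later phrase ``crossing only primitive codim-$2$ faces'' uses the paper's convention, so your earlier paragraph contradicts it. Second, your ``elementary cycles around individual codimension-$3$ faces'' do not exhaust the cycles of known trivial gain, and restricting to them would weaken even the paper's results: Lemma \ref{local} applies only to $(d-3)$-faces all of whose incident codimension-$2$ faces are primitive, and Theorem \ref{vorgener} crucially adds the half-belt cycles on the $\pi$-surface, which are not of your elementary type. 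Already in dimension three the elongated dodecahedron is handled in the paper through half-belt cycles, not through cycles around codimension-$3$ faces, so your version of condition (b) should be stated with the larger generating set.
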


Voronoi's conjecture has been proved for several families of parallelohedra with special local combinatorial properties.

Denote the set of all $k$-faces of a tiling $\mathcal{T}$ by $\mathcal{T}^k$, and the set of all $k$-faces of a polytope $P$ by $P^k$. 

\begin{defin}
A face $F\in \mathcal{T}^{d-k}$ is called {\it primitive} if $F$ belongs to exactly $k+1$ tiles in $\mathcal{T}$. 
\end{defin}

For example, any facet of $\mathcal{T}$ is a primitive because it belongs to exactly two tiles. For a cubic tiling of $\mathbb{R}^d$ the facets are the only primitive faces. 

A face $F$ of a parallelohedron $P$, with codimension $2$, is primitive if projection of $P$ along $F$ is a hexagon, and is otherwise non-primitive; in the non-primitive case $F$ belongs to four tiles in $\mathcal{T}(P)$. The set of facets of $P$ that are parallel to a given face $F$ with codimension $2$ is called a {\it belt} of $P$. A belt can have $4$ or $6$ facets and accordingly is called {\it $4$-} or {\it $6$-belt}. 

\begin{figure}[!ht]
\begin{center}
\includegraphics[scale=0.9]{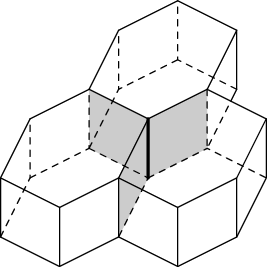}
\hskip 3cm
\includegraphics[scale=0.9]{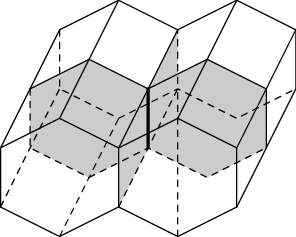}
\end{center}

\caption{Local structure of primitive and non-primitive $(d-2)$-faces.}
\label{dual2cells}
\end{figure}

\begin{defin}
A parallelohedron $P$ is called {\it $k$-primitive} if all the  $k$-faces of $\mathcal{T}(P)$ are primitive. A $0$-primitive parallelohedron is usually called a {\it primitive} parallelohedron. 
\end{defin}

Clearly, any $k$-primitive parallelohedron is also $(k+1)$-primitive. 

In 1909 Voronoi \cite{Vor} (see also \cite[Ch.2 \S 12]{GL}) proved Conjecture \ref{vorcon} for primitive parallelohedra 
In 1929 Zhitomirskii \cite{Zhit} proved Voronoi's conjecture for $(d-2)$-primitive parallelohedra 
In this case all the projections along $(d-2)$-faces are hexagons. 

\begin{defin}\label{kirred}
Assume that $k>1$, and that $G\in \mathcal{T}^{d-k}(P)$. Let $\mathcal{N}_G$ be the set normal vectors for all facets containing $G$. Then $P$ is {\it $k$-irreducible} if for each $G\in \mathcal{T}^{d-k}(T)$ the set $\mathcal{N}_G$ cannot be represented as $\mathcal{N}_G=\mathcal{N}_1\cup\mathcal{N}_2$ where $\mathcal{N}_1$, $\mathcal{N}_2$ are non-empty, and $\lin \mathcal{N}_1\cap \lin \mathcal{N}_2=\{\mathbf{0}\}.$
\end{defin}


For example, the result of Zhitomirskii \cite{Zhit} establishes Voronoi's conjecture for $2$-irreducible parallelohedra.

In 2005 A.~Ordine \cite{Ord} proved Voronoi's conjecture for $3$-irreducible parallelohedra. Up to the moment, no improvements of this result are known.

We also mention a result by R.~Erdahl \cite{Erd} who proved Voronoi's conjecture for space-filling zonotopes. A {\it zonotope} is a Minkowski sum of finitely many segments. Erdahl's proof is based on technique of unimodular vector representations, and he constructed the appropriate affine transformation for a zonotope using normals to its facets. This approach significantly differs from the original approach introduced by Voronoi and developed later by Zhitomirskii and Ordine where the main focus is on constructing an auxiliary polyhedral surface named generatrissa and building a quadratic form associated to the surface.


\begin{defin}\label{deltasur}
The surface $\partial P$ of a $d$-dimensional parallelohedron $P$ is homeomorphic to the $(d-1)$-dimensional sphere $\mathbb{S}^{d-1}$. After deletion of all closed non-primitive faces of codimension 2 of $P$ we obtain new $(d-1)$-dimensional manifold without boundary (in the topological sense of notion of ``manifold without boundary''). We call this manifold the $\delta$-{\it surface} of $P$ and denote it by $P_\delta$.

The manifold $P_\delta$ is centrally symmetric because $\partial P$ is centrally symmetric. 
If we glue together every pair of opposite points of the $P_\delta$, then we obtain another $(d-1)$-dimensional manifold that is a subset of real projective space $\mathbb{RP}^{d-1}$. We call this manifold {\it $\pi$-surface} of $P$ and denote by $P_\pi$.



\end{defin}

The $\delta$-surface is related to notions of Venkov graph and Venkov subgraph of a parallelohedron. Consider a graph with vertices corresponding to pairs of the antipodal (opposite) facets of parallelohedron $P$. Edges connect any two different vertices corresponding to four facets of $P$ belonging to a single belt. The edges a colored in blue and red. If the facets belong to a $4$-belt then the edge is blue, otherwise they belong to a $6$-belt and the edge is red. The graph is called {\it Venkov graph} of parallelohedron $P$. Subgraph with the same set of vertices and just red edges remained is called the {\it red Venkov subgraph} of $P$ (for details see \cite{Gavr}).

A. Ordine proved a theorem \cite[Thm. 2]{Ord} that a parallelohedron $P$ is irreducible if and only if its red Venkov subgraph is connected. The ``if'' part is easy. For the ``only if'' he used the technique of gain functions and (additive) version of the quality translation theorem by Ryshkov, Rybnikov \cite{RR}. As the first step Ordine proved that connectivity components of the red Venkov subgraph of $P$ correspond to independent sublattices in lattice consist of centers of polytopes of the tiling $\mathcal{T}(P)$. Second, he showed that a union of parallelohedra centered in points of such a sublattice is convex and invariant under translations of the sublattice affine hull. Essentially that finishes the proof. The preprint \cite{MO} contains a rewritten proof of Ordine's result, but it is in Russian.

The theorem is equivalent to a condition that $P$ is irreducible iff its boundary remains connected after deletion of all non-primitive faces of codimension 2.
Thus the $\delta$-surface of $P$ is connected iff $P$ cannot be represented as a direct sum of two parallelohedra of lower dimensions. The same holds for $\pi$-surface $P_\pi$. Ordine also showed that the general case follows if Voronoi's conjecture is proved for the irreducible case where a parallelohedron can not be represented as a direct sum. Therefore, we restrict our attention to parallelohedra with connected $\delta$-surface. 
If a parallelohedron can be represented as direct sum of two parallelohedra of lower dimensions, that is what we called reducible, then it is reducible at every vertex in the sense of the Definition \ref{kirred}.


In this paper we will prove (Theorem \ref{simpconn}) Voronoi's conjecture for parallelohedra with simply connected $\delta$-surface. Also we prove (Theorem \ref{vorgener}) Voronoi's conjecture for a family of parallelohedra with first $\mathbb Q$-homology group of $\pi$-surface generated by ``trivial'' cycles. These conditions are global while all preceding conditions which using canonical scaling method of Voronoi (see Section \ref{sect:cansc}) were local. Our results generalize theorems of Voronoi and Zhitomirskii but for now it is unclear whether our theorems are applicable to Ordine's case or not.

However, the comparison of Ordine's conditions and conditions of Theorem \ref{vorgener} for irreducible parallelohedra in small dimensions can be done. For reducible parallelhedra $P=P_1\oplus P_2$ Voronoi's conjecture is inherited from Voronoi conjecture for Minkowski summands $P_1$ and $P_2$. For dimension 3 this is done in the Section \ref{3dim}, and both conditions are true for all three possible irreducible three-dimensional parallelohedra. The four-dimensional case is done in \cite{Gar} and it is shown there that all four-dimensional parallelohedra satisfies conditions of the Theorem \ref{vorgener}. Using the methods described in \cite[Sect. 5]{Gar} for space-filling zonotopes (i.e. a zonotope and parallelohedron at the same time), one can show that the five-dimensional zonotope with zone vectors (i.e. vectors representing corresponding segments in the Minkowski sum, see also \cite{McM0} for the definition) represented by columns of the matrix below does not satisfy Ordine's conditions. It is a $\Pi$-zonotope (see the same paper \cite[Sect. 5]{Gar}) with the graph on six vertices with seven edges forming a 6-cycle with a long diagonal.

$$\left(
\begin{array}{rrrrrrr}
1&0&0&-1&0&0&0\\
0&0&0&1&-1&0&0\\
0&1&0&0&1&-1&0\\
0&0&0&0&0&1&-1\\
0&0&1&0&0&0&1
\end{array}
\right).$$

For three- and four-dimensional cases (Section \ref{3dim} and \cite{Gar}) the Theorem \ref{vorgener} is equivalent to the Voronoi's conjecture, but we don't know any proof for such equivalence for arbitrary dimension, or at least for five-dimensional parallelohedra. Also we don't know any example of a parallelohedron that satisfies the Voronoi conjecture and does not satisfy the Theorem \ref{vorgener}.

It seems, that ideas of topological extension of Voronoi's method were developed before, but we did not find any published article or available preprint. As a reference we can give the link \cite{Die} where in the last three paragraphs the result of oral communication of T. Dienst and and son of B.A. Venkov on connection of Voronoi's conjecture and certain cohomology group is exposed. In particular, B.A. Venkov told, that his father ``showed that for a given parallelohedron, the conjecture follows from the vanishing of a certain cohomology group assigned to this polytope. By this, he was able to prove the conjecture for all polytopes with no more than one belt of length 4'' (this is a precise quote from the web-site \cite{Die}).

\section{Canonical scaling and surface of a parallelohedron}\label{sect:cansc}

This section is devoted to the notion of a canonical scaling and its connection with Voronoi's conjecture.

\begin{defin}
A function $\mathfrak{s}:\mathcal{T}^{d-1}\longrightarrow \mathbb{R}_+$ is called a {\it canonical scaling} for the tiling $\mathcal{T}$ (for a parallelohedron $P$) if for each $(d-2)$-dimensional face $G\in \mathcal{T}^{d-2}$ the direction of the unit normal $\ib n_i$ for the facets $F_i$ that contain $G$ can be chosen so that
\begin{equation}
\sum\limits_{i=1}^{3 \text{ or }4}\mathfrak{s}(F_i)\ib n_i=\ib 0;\label{csdef}
\end{equation}
the sum ranges to $3$ if $G$ is primitive, and to $4$ if $G$ is not primitive.
\end{defin}

\begin{lem}\label{conjequiv}
Voronoi's conjecture is true for a parallelohedron $P$ iff the corresponding tiling $\mathcal{T}(P)$ admits a canonical scaling.
\end{lem}

\begin{rem}
This property was used by Voronoi \cite{Vor}, Zhitomirskii \cite{Zhit}, and Ordine \cite{Ord} to prove Voronoi's conjecture \ref{vorcon} for particular classes of parallelohedra. For a proof of Lemma \ref{conjequiv} see \cite{DG, CS, RR, Vor}.
\end{rem}

A primitive $(d-2)$-face $G$ belongs to exactly three facets, $F_1, F_2, F_3$. Unit normals $\ib n_i$ to these facets span a $2$-dimensional plane since they are orthogonal to $G$ and no two of them are collinear, so there is exactly one (up to a nonzero factor) linear dependence
$$\alpha_1\ib n_1+\alpha_2\ib n_2+\alpha_3\ib n_3=\ib 0.$$
Therefore, a canonical scaling $\mathfrak{s}$ for $\mathcal{T}$ (if exists) must satisfy the local rule:
\begin{equation}\label{csrule}
\dfrac{\mathfrak{s}(F_i)}{\mathfrak{s}(F_j)} = \dfrac{|\alpha_i \mathbf n_i|}{|\alpha_j \mathbf n_j|} = \left|\dfrac{\alpha_i}{\alpha_j}\right| .
\end{equation}

\begin{defin}
The fraction $\left|\dfrac{\alpha_j}{\alpha_i}\right|$ is the value of the {\it gain function} $\mathfrak{g}(F_i,F_j)$, defined on pairs of facets that share a primitive face of codimension two. 

As we have just seen, the gain function is uniquely defined for pairs of facets linked by a primitive $(d-2)$-face. The gain function shows how a canonical scaling changes along a path that travels facet-to-facet across such primitive faces. The gain function has the property that $\mathfrak{g}(F_i,F_j)=\dfrac{1}{\mathfrak{g}(F_j,F_i)}$.
\end{defin}

In the remaining portion of this section we use the gain function to obtain necessary and sufficient conditions for the existence of a canonical scaling.

\begin{lem}\label{loccansc}
If there exists a positive valued function $\mathfrak{s}'$ on the set of all facets of $P$ that satisfies condition $\eqref{csrule}$ for every two facets with common primitive $(d-2)$-face then there exists a canonical scaling $\mathfrak{s}:\mathcal{T}^{d-1}\longrightarrow \mathbb{R}_+$.
\end{lem}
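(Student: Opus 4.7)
The plan is to extend $\mathfrak{s}'$ from $P^{d-1}$ to $\mathcal{T}^{d-1}$ by translation, after first enforcing antipodal symmetry across pairs of opposite facets of $P$.

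The initial step I would take is to show that one may assume $\mathfrak{s}'(F)=\mathfrak{s}'(F^*)$ for every facet $F$ of $P$, where $F^*=-F$ is the centrally opposite facet. If this identity fails for the given $\mathfrak{s}'$, replace it by the geometric mean $\widehat{\mathfrak{s}}(F):=\sqrt{\mathfrak{s}'(F)\,\mathfrak{s}'(F^*)}$. The key observation is that the central symmetry of $P$ sends primitive $(d-2)$-faces to primitive $(d-2)$-faces (the projection of $P$ along $G$ and along $-G$ gives the same hexagon, since $G$ and $-G$ span the same linear subspace) and maps outward normals at $G$ to their negatives at $-G$. Consequently the gain ratio $|\alpha_i/\alpha_j|$ between two facets of $P$ meeting at $G$ coincides with the gain ratio between their antipodes meeting at $-G$. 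Combining $\eqref{csrule}$ at $G$ and at $-G$ then shows that $\widehat{\mathfrak{s}}$ still obeys $\eqref{csrule}$, and by construction $\widehat{\mathfrak{s}}(F)=\widehat{\mathfrak{s}}(F^*)$.

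Next I would define $\mathfrak{s}:\mathcal{T}^{d-1}\to\mathbb{R}_+$ as follows: a facet $\tilde F$ of $\mathcal{T}$ lies between two tiles $P+v_1$ and $P+v_2$, and the translates $\tilde F-v_1,\ \tilde F-v_2$ form a pair of antipodal facets $F,F^*$ of $P$; set $\mathfrak{s}(\tilde F):=\widehat{\mathfrak{s}}(F)$. The previous step makes this well-defined, and in particular $\mathfrak{s}$ is $\Lambda(P)$-invariant. To verify the canonical scaling condition $\eqref{csdef}$ at a primitive $(d-2)$-face $G\in\mathcal{T}^{d-2}$, I would look at $G$ from the viewpoints of the three tiles $P_1,P_2,P_3$ incident to it. From the viewpoint of $P_1$, two of the three facets meeting at $G$ are facets of $P_1$, and $\eqref{csrule}$ pulled back by translation to $P$ yields their pairwise $\mathfrak{s}$-ratio; the analogous arguments from $P_2$ and $P_3$ give the other two pairwise ratios. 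These three ratios are mutually consistent because three pairwise non-parallel coplanar vectors admit a linear dependence unique up to scalar, so they combine to give $\eqref{csdef}$ at $G$. At a non-primitive $(d-2)$-face the four incident facets split into two $\Lambda(P)$-equivalent pairs corresponding to the opposite sides of the projected parallelogram; the identity $\eqref{csdef}$ then follows directly from the $\Lambda(P)$-invariance of $\mathfrak{s}$, which makes the two contributions in each pair cancel with appropriately oriented antiparallel normals.

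The crux of the argument is the antipodal symmetrization step, since it depends on the invariance of the local gain function under the central symmetry of $P$. Once this is available, the remaining part of the proof is a careful but routine verification using the correspondence between facets of $\mathcal{T}$ and pairs of opposite facets of $P$.
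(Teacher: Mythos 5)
Your proof is correct and follows essentially the same route as the paper: symmetrize $\mathfrak{s}'$ so that opposite facets of $P$ receive equal values, extend by lattice translation, and verify \eqref{csdef} separately at primitive $(d-2)$-faces (via \eqref{csrule}) and at non-primitive ones (via cancellation of antiparallel normals on lattice-equivalent parallel facet pairs). The only divergence is the symmetrization step: the paper notes that \eqref{csrule} applied around a $6$-belt already forces $\mathfrak{s}'(F)=\mathfrak{s}'(F^*)$ and simply redefines $\mathfrak{s}'(F^*)$ when $F$ lies in no $6$-belt, whereas your geometric-mean trick, justified by the invariance of the gain ratios under the central symmetry of $P$, handles both cases uniformly and is equally valid.
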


\begin{rem}
The inverse statement is trivial since we can take as $\mathfrak{s}'$ the restriction of $\mathfrak{s}$ on the surface of one copy of $P$.
\end{rem}

\begin{proof}
Consider an arbitrary facet $F$ and its opposite facet $F'$ of $P$.

If $F$ belongs to some 6-belt then application of rule \eqref{csrule} to this belt immediately gives us $\mathfrak{s}'(F)=\mathfrak{s}'(F').$ If $F$ does not belong to any $6$-belt, and $\mathfrak{s}'(F')\neq \mathfrak{s}'(F)$, then set $\mathfrak{s}'(F'):=\mathfrak{s}'(F)$. This modification of $\mathfrak{s}'$ preserves the hypothesis that condition \eqref{csrule} holds at all primitive $(d-2)$-faces.

Now the function $\mathfrak{s}'$ is invariant with respect to central symmetry of $P.$ We translate this function on all copies of $P$ in the tiling $\mathcal{T}$. This translation is correctly defined since different tiles of $\mathcal{T}$ are glued together by opposite facets of $P$ and values of $\mathfrak{s}'$ are equal on these facets. The constructed function $\mathfrak{s}:\mathcal{T}^{d-1}\longrightarrow \mathbb{R}_+$  satisfies condition \eqref{csdef} for every primitive $(d-2)$-face because $\mathfrak{s}'$ satisfies rule \eqref{csrule}. For a non-primitive $(d-2)$-face condition~\eqref{csdef}  holds as well because every such face lies in two pairs of opposite facets with equal values of $\mathfrak{s}'$ in each pair. Consequently, we can choose opposite normal directions for the facets in each pair and obtain a zero sum. Thus the function $\mathfrak{s}$ is a canonical scaling.
\end{proof}

\begin{defin} \label{primcycle}
A sequence of facets $\gamma=\left[F_0,\ldots,F_k\right]$ is a {\it primitive combinatorial path} on $P$ if consecutive facets $F_i$ and $F_{i+1}$ are linked by a common primitive face of codimension 2. We call $\gamma$ a {\it primitive cycle} if $F_0=F_k$.
\end{defin}

Define the gain function $\mathfrak{g}$ for every primitive path of $P$ by the formula 
$$\mathfrak{g}(\gamma)=\prod_{i=1}^k\mathfrak{g}(F_{i-1},F_i).$$

We call a curve $\gamma$ on the $\delta$-surface of $P$ (see Definition \ref{deltasur}) {\it generic}: if endpoints of $\gamma$ are interior to facets of $P$; $\gamma$ does not intersect any face of dimension less than $d-2$; and the intersection of $\gamma$ with a $(d-2)$-dimensional face of $P$ is transversal.

For every generic curve $\gamma$ on the $\delta$-surface of $P$ we define a {\it supporting primitive combinatorial path} $\gamma'$ consisting of facets that support $\gamma$. 
This allows us to define a gain function on generic curves: $\mathfrak{g}(\gamma):=\mathfrak{g}(\gamma')$. Obviously, for the union of two curves $\gamma=\gamma_1\cup \gamma_2$ we have $\mathfrak{g}(\gamma)=\mathfrak{g}(\gamma_1)\cdot \mathfrak{g}(\gamma_2)$.

\begin{lem}\label{iffcond} Voronoi's conjecture $\ref{vorcon}$ is true for a parallelohedron $P$ iff $\mathfrak{g}(\gamma)=1$ holds for every generic cycle $\gamma$ on the $\delta$-surface of $P$.
\end{lem}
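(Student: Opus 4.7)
The plan is to use Lemma~\ref{loccansc} together with the remark that existence of a canonical scaling for $\mathcal{T}(P)$ is equivalent to the Voronoi conjecture for $P$; in this way the problem reduces to producing (or, in the forward direction, reading off) a function $\mathfrak{s}':P^{d-1}\to\mathbb{R}_+$ satisfying the local rule \eqref{csrule} at every primitive $(d-2)$-face of $P$.

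For the easy direction, I would assume the Voronoi conjecture and pick a canonical scaling $\mathfrak{s}:\mathcal{T}^{d-1}\to\mathbb{R}_+$. Applied to a primitive $(d-2)$-face, condition \eqref{csdef} forces the ratio $\mathfrak{s}(F_j)/\mathfrak{s}(F_i)$ of the scaling on two adjacent facets to coincide with $\mathfrak{g}(F_i,F_j)$. For a generic cycle $\gamma$ on $P_\delta$ the supporting primitive cycle $[F_0,\ldots,F_k]$ lies on a single copy of $P$ in $\mathcal{T}$ and satisfies $F_0=F_k$, so the defining product of $\mathfrak{g}(\gamma)$ telescopes to $\mathfrak{s}(F_k)/\mathfrak{s}(F_0)=1$.

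For the converse, assume every generic cycle on $P_\delta$ has gain $1$. Since it is enough to prove the Voronoi conjecture for parallelohedra that are not direct sums, I may assume $P_\delta$ is path-connected, as noted after Definition~\ref{deltasur}. Fix a base facet $F_0$ with a point $x_0$ in its interior and set $\mathfrak{s}'(F_0):=1$. For every other facet $F$ of $P$ choose a generic path $\gamma_F$ on $P_\delta$ from $x_0$ to an interior point of $F$ and define $\mathfrak{s}'(F):=\mathfrak{g}(\gamma_F)$. The hypothesis makes this independent of the chosen path, because any two such paths can, after a small generic perturbation, be concatenated into a generic cycle on $P_\delta$, which has gain $1$. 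Applied to a short arc that crosses a single primitive $(d-2)$-face of $P$ transversally, this definition yields exactly rule \eqref{csrule} for the two facets incident to that face. Lemma~\ref{loccansc} then promotes $\mathfrak{s}'$ to a canonical scaling on the whole tiling $\mathcal{T}$, and the remark concludes the Voronoi conjecture for $P$.

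The main subtlety to watch is the well-definedness of $\mathfrak{s}'$, which hinges on two global facts: the path-connectedness of $P_\delta$ (so that every facet is reachable from $F_0$ by a generic path) and the ability to approximate the concatenation of two generic paths with matching endpoints by a generic cycle, so that the cycle hypothesis genuinely implies path-independence. Both points are handled by the reduction to non-direct-sum parallelohedra and by standard transversality arguments for curves crossing the codimension-$2$ skeleton of $P$; once they are in hand, the rest of the argument is a telescoping computation plus a direct appeal to Lemma~\ref{loccansc}.
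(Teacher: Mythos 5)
Your proposal is correct and follows essentially the same route as the paper: the backward direction is the same telescoping of ratios of a canonical scaling, and the forward direction constructs $\mathfrak{s}'$ by path-transport of the gain function from a base facet, uses the cycle hypothesis for path-independence, verifies rule \eqref{csrule} via a short arc crossing a primitive $(d-2)$-face, and then invokes Lemma \ref{loccansc} together with the remark that a canonical scaling is equivalent to the Voronoi conjecture. Your explicit mention of connectedness of $P_\delta$ and of the transversality/perturbation issue is a slight refinement of the paper's argument, not a departure from it.
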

\begin{proof}
Assume that $\mathfrak{g}(\gamma)=1$ holds for every generic cycle $\gamma$. We will construct a function $\mathfrak{s}'$ from Lemma \ref{loccansc} in the following way. Consider an arbitrary facet $F$ and put $\mathfrak{s}'(F)=1$. Now for every facet $G$ consider an arbitrary generic curve $\gamma$ that starts in the center of $F$ and ends in the center of $G$ and put $\mathfrak{s}'(G)=\mathfrak{g}(\gamma)$. We show that $\mathfrak{s}'$ is defined correctly and satisfies conditions of Lemma \ref{loccansc}.

Assume that two different curves $\gamma_1$ and $\gamma_2$ produce different values of $\mathfrak{s}'$ on a facet $G$. Then $\mathfrak{g}(\gamma_1)\neq\mathfrak{g}(\gamma_2)$. For a cycle $\gamma=\gamma_1\cup\gamma_2^{-1}$ (here $\gamma_2^{-1}$ denotes the reversed curve $\gamma_2$) we have $\mathfrak{g}(\gamma)=\dfrac{\mathfrak{g}(\gamma_1)}{\mathfrak{g}(\gamma_2)}\neq 1$, a contradiction. So the functions $\mathfrak{s}'$ is defined correctly on the set of all facets of $P$.

In order to show that $\mathfrak{s}'$ satisfies conditions of the Lemma \ref{loccansc}
consider two arbitrary facets $F_1$ and $F_2$ with common non-primitive $(d-2)$-face. Assume that values on facets $F_i$ were obtained with paths $\gamma_i$, consider a path $\gamma_3$ that connects centers of $F_1$ and $F_2$ through their common $(d-2)$-face. We have the cycle $\gamma=\gamma_3\cup\gamma_2^{-1}\cup\gamma_1$, so $$\mathfrak{g}(\gamma_3)=\dfrac{\mathfrak{g}(\gamma_2)}{\mathfrak{g}(\gamma_1)}=\frac{\mathfrak{s}'(F_2)}{\mathfrak{s}'(F_1)}.$$ 
By definition, $\mathfrak{g}(\gamma_3)=\mathfrak{g}(F_1,F_2)$, therefore $\mathfrak{s}'$ satisfies condition of Lemma \ref{loccansc} and there exists a canonical scaling of the tiling, so by lemma \ref{conjequiv} Voronoi's conjecture is true for $P$.

On the other hand, if Voronoi's conjecture is true for $P$ then there is a canonical scaling $\mathfrak{s}:\mathcal{T}^{d-1}\longrightarrow \mathbb{R}_+$, from which we can determine the corresponding gain function. It easily follows using \eqref{csrule} that $\mathfrak{g}(\gamma)=1$ for every generic cycle $\gamma$ on the $\delta$-surface of $P$.
\end{proof}

\section{Dual 3-cells and local consistency of canonical scaling}

\begin{defin}
Consider an arbitrary face $G$ of codimension $k$ of the tiling $\mathcal{T}(P).$ Then, the corresponding {\it dual $k$-cell} $\mathcal{D}_G$ is the convex hull of the centers of all tiles that share $G$. 

For example, the dual cell corresponding to a $d$-dimensional polytope $P'\in \mathcal{T}(P)$ is a single point --- its center.
\end{defin}

\begin{defin}
We say that a dual cell $\mathcal{D}_G$ {\it is a face} of a dual cell $\mathcal{D}_H$ iff $H$ is a face of $G$. Using that we can introduce the notion of {\it incidence} for dual cells. A cell $\mathcal{D}_G$ is {\it incident} to a dual cell $\mathcal{D}_H$ iff $H$ is incident to $G$.
\end{defin}

It is clear that a face $\mathcal{D}_G$ of a dual cell $\mathcal{D}_H$ belongs to it as a point-set, but it is not proved and no counterexample known whether $\mathcal{D}_G$ is a polyhedral face of $\mathcal{D}_H$.
Such defined incidences on dual cells induce an inverse face lattice structure comparing with the face lattice of $\mathcal{T}(P)$. 

\begin{defin}
$\mathcal{D}_F$ is said to be {\it combinatorially equivalent} to a polytope in case the polytope has identical face lattice. 

For example, a dual $2$-cell $\mathcal{D}_F$ is combinatorially equivalent to either a triangle for primitive $F$ or to a parallelogram for a non-primitive $F$ (see figure \ref{dual2cells}).
\end{defin}

We will use the following classification theorem on $(d-3)$-faces of parallelohedra, or equivalently dual 3-cells. The first part of the theorem was originally
stated by Delone, and the second is a reformulation in terms of dual cells.

\begin{thm}[Delone, \cite{Del}, see also \cite{Ord}]\label{thmdel}
There are five possible combinatorial types of coincidence of parallelohedra at $(d-3)$-faces, and each of five types (see figure $\ref{dual3cells}$) has a 
representative when $d = 3$.

In other words, every dual $3$-cell is combinatorially equivalent to one of the five $3$-dimensional polytopes: cube, triangular prism, tetrahedron, octahedron, or quadrangular pyramid.
\end{thm}

\begin{figure}[!ht]
\begin{center}
\includegraphics[scale=0.7]{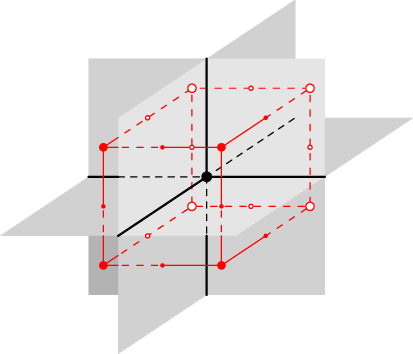}
\hskip 1cm
\includegraphics[scale=0.7]{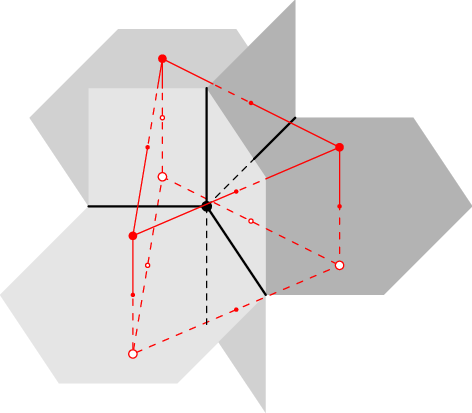}
\hskip 1cm
\includegraphics[scale=0.7]{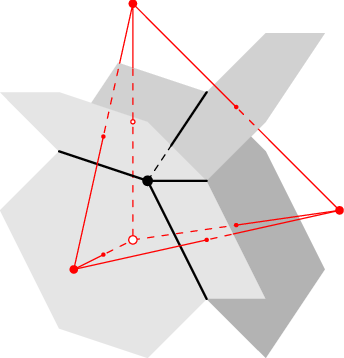}
\hskip 1cm
\includegraphics[scale=0.7]{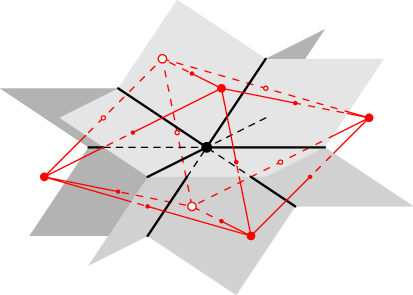}
\hskip 1cm
\includegraphics[scale=0.7]{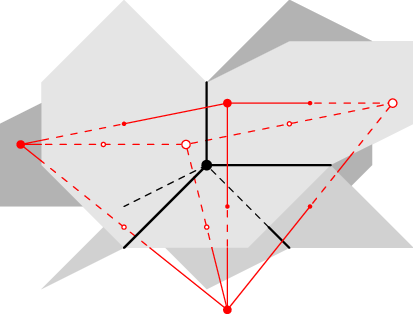}

\end{center}

\caption{Dual $3$-cells.}
\label{dual3cells}
\end{figure}

Every $(d-3)$-face $G$ of a $d$-polytope $P$ is a face of codimension $2$ in the boundary complex $\partial P$ which is homeomorphic to a $(d-1)$-sphere. We consider a union $U_G$ of all facets $F_i$ of $P$ containing $G$ as a neighborhood of $G$ in $\partial P$. $U_G$ is naturally homeomorphic to a product $G\times D^2$ where $D^2$ is a $2$-dimensional disk. Each facet $F_i$ containing $G$ could be coherently represented as $G\times S_i \subset G\times D^2$ where $S_i$ is a $2$-dimensional sector of $D^2$. Union of sectors is the entire $D^2$. Thus all the facets $F_i$ containing $G$ form a closed combinatorial path where each $F_i$ appears just once along it. Clearly, the closed path is unique up to two choices of direction for a bypass along it. 

\begin{defin}
The closed combinatorial path generated by $(d-3)$-face $G$ of a $d$-polytope $P$ on the surface of $P$, as described above, we call a {\it cycle around} $G$.
\end{defin}

\begin{lem}\label{local}
Let $G$ be a $(d-3)$-face of a parallelohedron $P$. If the $(d-2)$-faces containing $G$ are all primitive, then, the cycle around $F$ is primitive, and the gain function along this path has value $1$.
\end{lem}

\begin{rem}
The cycle around $G$ is a primitive cycle in sense of Definition \ref{primcycle}.
\end{rem}

In the proof of Lemma~\ref{local} we will use the following Lemma.

\begin{lem}\label{zh}
Let $G$ be a $(d-3)$-face of parallelohedron $P$. Suppose that all $(d-2)$-faces of $P$ containing $G$ are primitive.
Let $P = P_0$, $P_1$, $P_2, \ldots$ be all the parallelohedra of $\mathcal T$ that share $G$. Then there exist 
affine functions $U_0, U_1, U_2, \ldots$ with the following property: given any pair of parallelohedra $(P_i, P_j)$
with a common facet, the functions $U_i$ and $U_j$ coincide on the affine hull of the facet $P_i\cap P_j$ and nowhere else.
\end{lem}

\begin{rem}
This lemma is an extension of a statement by Zhitomirskii~\cite[Sect. 1 and 4]{Zhit}. He needed all $(d-2)$-faces of the entire tiling containing $G$ to be primitive. We relax the condition and demand just part of these $(d-2)$-faces to be primitive, namely those belonging to $P$. As a result we admit an extra case $(3)$ for a dual cell of $G$.
\end{rem}

\begin{proof} 
Theorem \ref{thmdel} implies that there are three possible cases.

\begin{enumerate}
	\item The dual cell of $G$ is combinatorially equivalent to a tetrahedron.
	\item The dual cell of $G$ is combinatorially equivalent to an octahedron.
	\item The dual cell of $G$ is combinatorially equivalent to a quadrangular pyramid and $P$ corresponds to its apex.
\end{enumerate}

We will construct $U_i$ to be constant on every $(d-3)$-dimensional affine plane parallel to $G$. Therefore we will restrict ourselves to
the 3-dimensional images of parallelohedra $P_i$ under the projection $\pi_G$ along the face $G$.

The projections $\{\pi_G(P_i) : i = 0, 1, 2, \ldots\}$ split the neighbourhood of the point $\pi_G(G)$ in the same way as a 3-dimensional polyhedral fan
$\mathcal C$ does. The combinatorics of $\mathcal C$ is completely prescribed by the type of the dual cell of $G$.

Our goal is, in fact, to construct a piecewise affine continuous function $U$ on $\mathcal C$ such that the restriction of $U$ to each 3-dimensional
cone of $\mathcal C$ is affine function, and the affine functions for different cones are different. For Cases 1 and 2 this was done by Zhitomirskii, however,
in order to make our proof more transparent, we will consider these cases again along with Case 3. For simplicity, suppose that $\pi_G(G)$ is
the origin. We will also require that $U(\pi_G(G)) = 0$, therefore we can speak of linear functions rather than affine.

\noindent{\bf Case 1.} $\mathcal C$ has four 1-dimensional faces (rays). Let $\mathbf x_1$, $\mathbf x_2$, $\mathbf x_3$, $\mathbf x_4$ be the unit vectors
along these rays. For every $\lambda \geq 0$ and $j = 1, 2, 3, 4$ set $U(\lambda \mathbf x_j) = \lambda$. It defines $U$ because
every 3-dimensional cone $C$ of the fan $\mathcal C$ is 3-sided, and three linear functions on extreme rays of $C$ can be extended in a unique way to a 
linear function on $C$.

\noindent{\bf Case 2.} Let $C$ be one of the 3-dimensional cones of the fan $\mathcal C$. $C$ is 4-sided, and let $\mathbf x_1$, $\mathbf x_2$, $\mathbf x_3$, $\mathbf x_4$ be the vectors along the extreme rays of $C$ in the order they met by a path around $\pi_G(G)$ on $\partial C$. After a proper scaling of
these vectors, we can assume that $\mathbf x_1 + \mathbf x_3 = \mathbf x_2 + \mathbf x_4$. Now for every $\lambda \in \mathbb R$ and $j = 1, 2, 3, 4$ set
\begin{equation}\label{pl}
U(\lambda \mathbf x_j) = |\lambda|.
\end{equation}

Let $C'$ be a 4-sided 3-dimensional cone with the vertex at the origin. Let $\mathbf y_1$, $\mathbf y_2$, $\mathbf y_3$, $\mathbf y_4$ be the vectors along 
its extreme rays such that $\mathbf y_1 + \mathbf y_3 = \mathbf y_2 + \mathbf y_4$. Suppose that $u_i$, $i = 1, 2, 3, 4$, are linear functions on the extreme
rays of $C'$ corresponding to $\mathbf y_i$. One can check that $u_1$, $u_2$, $u_3$ and $u_4$ can be extended to the same linear function $U'$ if and only if
\begin{equation}\label{4s} 
u_1(\mathbf y_1) + u_3(\mathbf y_3) = u_2(\mathbf y_2) + u_4(\mathbf y_4).
\end{equation}

The condition above can be applied to any of the six cones of the fan $\mathcal C$ Therefore the function $U$ can be recovered from its values on all 
1-dimensional faces of the fan $\mathcal C$, if those values are as in formula \eqref{pl}.

\noindent{\bf Case 3.} Let $C$ be the only 4-sided cone of the fan $\mathcal C$. Once again, we can assume that $\mathbf x_1$, $\mathbf x_2$, $\mathbf x_3$, $\mathbf x_4$ are the vectors along the extreme rays of $C$ satisfying $\mathbf x_1 + \mathbf x_3 = \mathbf x_2 + \mathbf x_4$. 
Let $\mathbf y$ be the vector along the remaining 1-dimensional face of $\mathcal C$. Because of the structure of $\mathcal C$, the 2-face of $\mathcal C$
between $\mathbf x_1$ and $\mathbf y$, and the 2-face between $\mathbf x_3$ and $\mathbf y$ lie in the same plane. Therefore the vectors 
$\mathbf x_1$, $\mathbf x_3$, and $\mathbf y$ are coplanar. Similarly, the vectors $\mathbf x_2$, $\mathbf x_4$, and $\mathbf y$ are coplanar as well.
Consequently, $\mathbf y$ is a linear combination of $\mathbf x_1$ and $\mathbf x_3$, and, simultaneously of $\mathbf x_2$ and $\mathbf x_4$. This is possible
only if $\mathbf y$ is collinear to the vector $-\mathbf x_1 - \mathbf x_3 = -\mathbf x_2 - \mathbf x_4$.

Since $\mathcal C$ consists of convex cones, $\mathbf y$ has the same direction with $-\mathbf x_1 - \mathbf x_3$, but not with $\mathbf x_1 + \mathbf x_3$.
Hence the directions of rays of $\mathcal C$ are the directions of vectors $\mathbf x_1$, $\mathbf x_2$, $\mathbf x_3$, $\mathbf x_4$, and 
$-\mathbf x_1 - \mathbf x_3$.

For every $\lambda \geq 0$ and $j = 1, 2, 3, 4$ set $U(\lambda \mathbf x_j) = \lambda$ and $U(-\lambda \mathbf x_1 - \lambda \mathbf x_3) = \lambda$.
Because of the condition \eqref{4s}, we can recover $U$ on the cone $C$. Further, as in Case 1, we can recover $U$ on all other cones of $\mathcal C$,
because these cones are 3-sided.

Thus in every case we have constructed the function $U$. Returning from the projections to the parallelohedra $P_i$, we get the required set of functions
$U_j$.
\end{proof}


\begin{proof}[Proof of Lemma~\ref{local}]

Lemma~\ref{zh} provides us with the set of functions $U_0, U_1, U_2, \ldots$ such that $U_i$ and $U_j$ coincide on
$P_i \cap P_j$, if $P_i \cap P_j$ is a $(d-1)$-face. For all such pairs $(i, j)$ let $U_j = U_i + \mathbf a^T_{ij}\mathbf x + b_{ij}$
for vector argument $\mathbf x$ and some vector parameter $\mathbf a_{ij}$. 

Assume that $P_i$ and $P_j$ share a common facet $F_{ij}$. The difference $U_j - U_i$ is zero on $F_{ij}$ and is not zero outside
the affine hull of $F_{ij}$. Thus $U_j - U_i$ is not a constant and therefore $\mathbf a_{ij} \neq \mathbf 0$.

Hence the affine hull of $F_{ij}$ is a hyperplane represented by the equation $\mathbf a^T_{ij}\mathbf x + b_{ij} = 0$. Since
the vector $\mathbf a_{ij}$ is a normal to the hyperplane, we conclude that $\mathbf a_{ij}$ and $F_{ij}$ are orthogonal. 

If $P_0$, $P_i$ and $P_j$ share a common $(d-2)$-face $F_{0ij}$, then, since $F_{0ij}$ is primitive and
$$\mathbf a_{0i} + \mathbf a_{ij} - \mathbf a_{0j} = \mathbf 0,$$
we have $\mathfrak g(F_{0i}, F_{0j}) = |\mathbf a_{0i}|/|\mathbf a_{0j}|$.

Now if $\gamma = [F_{01}, F_{02}, \ldots F_{0n}, F_{01}]$ is a cycle around $F$, then
$$\mathfrak g(\gamma) = \frac{|\mathbf a_{01}|}{|\mathbf a_{02}|}\cdot \frac{|\mathbf a_{02}|}{|\mathbf a_{03}|}\cdot \ldots \cdot
\frac{|\mathbf a_{0n}|}{|\mathbf a_{01}|} = 1.$$
 
\end{proof}

\section{Voronoi's conjecture for parallelohedra with simply connected $\delta$-surface}

\begin{lem}\label{hom}
If two generic cycles $\gamma_1$ and $\gamma_2$ on the $\delta$-surface of a parallelohedron $P$ are homotopy equivalent then $\mathfrak{g}(\gamma_1)=\mathfrak{g}(\gamma_2)$.
\end{lem}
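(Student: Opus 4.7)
The plan is to realize the homotopy from $\gamma_1$ to $\gamma_2$ as a finite sequence of elementary local moves on generic curves and to verify that each such move leaves the gain function $\mathfrak{g}$ unchanged. The only "nontrivial" move will be handled precisely by Lemma~\ref{local}.

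First I would take a continuous homotopy $H:S^1\times[0,1]\to P_\delta$ from $\gamma_1$ to $\gamma_2$ and perturb it slightly to be in general position with respect to the natural stratification of $P_\delta$ by open faces of $P$. After such a perturbation $H$ meets each facet $F\in P^{d-1}$ in a $2$-dimensional region with piecewise-smooth boundary, meets each primitive $(d-2)$-face transversally along a $1$-manifold (a finite union of arcs and circles), meets each $(d-3)$-face of $P_\delta$ in finitely many isolated points, and avoids every face of dimension $\leq d-4$ as well as every $(d-3)$-face of $P$ having a non-primitive $(d-2)$-face in its star. Such a perturbation exists because the "bad" locus has codimension at least $2$ in the $(d-1)$-manifold $P_\delta$.

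Next I would subdivide $[0,1]$ into short intervals $[t_i,t_{i+1}]$ so that on each subinterval exactly one of three elementary events occurs for the family $\gamma_t=H|_{S^1\times\{t\}}$. \emph{Type A}: $\gamma_t$ deforms while keeping its supporting primitive path unchanged, so $\mathfrak{g}(\gamma_t)$ is literally constant. \emph{Type B}: a component of $H^{-1}(e)$ for some primitive $(d-2)$-face $e$ gains or loses a tangency with some circle $S^1\times\{t_\ast\}$, creating or cancelling two adjacent crossings of $e$; the supporting primitive path then changes by inserting or deleting a fragment $[\ldots,F,G,F,\ldots]$, and since $\mathfrak{g}(F,G)\mathfrak{g}(G,F)=1$ by the very definition of the gain function, $\mathfrak{g}(\gamma_t)$ is preserved. \emph{Type C}: the curve sweeps across an isolated preimage point of a $(d-3)$-face $f$ lying on $P_\delta$; in a $2$-disk transversal to $f$, the curve is a short arc whose endpoints move from one side of the point $f$ to the other, so a portion of the supporting primitive path is replaced by the complementary arc of the primitive cycle around $f$. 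Since $f$ belongs to $P_\delta$, all $(d-2)$-faces containing $f$ are primitive, and Lemma~\ref{local} tells us that the full primitive cycle around $f$ has gain $1$; hence the two arcs have reciprocal gains and $\mathfrak{g}(\gamma_t)$ is again unchanged. Concatenating these elementary steps yields $\mathfrak{g}(\gamma_1)=\mathfrak{g}(\gamma_2)$.

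The main technical obstacle is the general-position statement: one must verify that a generic small perturbation of $H$ can be made to stay inside $P_\delta$, in particular to avoid the removed closed non-primitive $(d-2)$-faces and the $(d-3)$-faces excluded from $P_\delta$, and that the bookkeeping of supporting primitive paths across a Type~C event really amounts to swapping one arc of the primitive cycle around $f$ for the complementary arc. Both points follow from elementary codimension counting, since the excluded loci have codimension at least $2$ in $P_\delta$, but writing them up rigorously requires some care because the $\delta$-surface is a manifold-with-deletions rather than a smooth manifold, and its cell structure inherited from $P$ near the removed strata must be handled explicitly.
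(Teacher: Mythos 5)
Your argument is essentially the paper's own proof: both perturb the homotopy into general position, reduce to three elementary events (no change of supporting path, tangency with a primitive $(d-2)$-face producing or cancelling a fragment $[F,G,F]$, and sweeping past a $(d-3)$-face), and invoke Lemma~\ref{local} for the last case, with the codimension-$\geq 2$ count justifying that the perturbed homotopy stays in $P_\delta$. The proposal is correct and takes the same route, only spelling out the general-position and bookkeeping details slightly more explicitly than the paper does.
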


\begin{rem}
Under ``homotopy'' here and after we mean the relation of continuous homotopy.
\end{rem}

\begin{proof}
Consider an arbitrary homotopy $F(t)$ between $\gamma_1$ and $\gamma_2$ such that $F(0)=\gamma_1$ and $F(1)=\gamma_2.$ With small perturbation of the homotopy $F$ we can obtain another homotopy $G(t)$ such that:

\begin{enumerate}
\item $G(0)=\gamma_1$ and $G(1)=\gamma_2$;

\item at any moment $t$ cycle $G(t)$ does not intersect any face of $P$ with dimension less than $d-3$;

\item at any moment $t$ cycle $G(t)$ does not have more than one point of intersection with set of all $(d-3)$-faces of $P$;

\item there is only a finite number of moments $t_1,\ldots,t_n$ such that each cycle $G(t_i)$ intersects some $(d-3)$-face $F^{d-3}_i$ of $P$;

\item there is only finite number of moments $\tau_1,\ldots,\tau_k~ (t_i\neq \tau_j)$ such that each cycle $G(\tau_j)$ has exactly one non-transversal intersection of $G(\tau_j)$ with some $(d-2)$-face $F^{d-2}_j$. For all other $t\neq \tau_j$ each intersection of $G(t)$ with $(d-2)$-faces of $P$ is transversal.
\end{enumerate}
So in other words we can find a perturbation of the homotopy $F(t)$ that will be in general position.

For any $t \in \left(0, 1\right)$ equal neither to any $t_i$ nor to any $\tau_j$ the cycle $G(t)$ is generic, so we can evaluate the gain function $\mathfrak{g}(G(t))$. We show that $\mathfrak{g}(G(t))$ does not depend on $t$.

For any segment $[a,b]\subset[0,1]$ that does not contain points
$t_i$ and $\tau_j$ the function $\mathfrak{g}(G(t))$ is constant because the primitive path $\gamma'_t$ that supports $G(t)$ does not depend on $t$ while $t\in[a,b]$. So we just need to show that the gain function $\mathfrak{g}(G(t))$ does not change when $t$ passes across $t_i$ or $\tau_j$.

If $t$ passes across $\tau_j$ then either the supporting primitive path does not change or one of its facets $F^{d-1}$ could be replaced by a copy of a sequence $[F^{d-1},G^{d-1},F^{d-1}]$ (or vice versa) for some facets $F^{d-1}$ and $G^{d-1}$ with common primitive $(d-2)$-face. In the latter case gain function $\mathfrak{g}$ does not change because $\mathfrak{g}([F^{d-1},G^{d-1},F^{d-1}])=1.$

If $t$ passes across $t_i$ then either the supporting primitive path does not change or some subpath $[F^{d-1}_{i,1},\ldots,F^{d-1}_{i,2}]$  with each facet containing $F^{d-3}_i$ changes into subpath with the same startpoint and endpoint and again all facets of this new subpath containing $F^{d-3}_i$. In this case the gain function $\mathfrak{g}(G(t))$ will not change due to lemma \ref{local}.

Therefore the gain function $\mathfrak{g}(F(t))$ is constant and $\mathfrak{g}(\gamma_1)=\mathfrak{g}(F(0))=\mathfrak{g}(F(1))=\mathfrak{g}(\gamma_2)$.
\end{proof}

\begin{cor}
The gain function $\mathfrak{g}$ is a homomorphism of the fundamental group $\pi_1(P_\delta)$ into $\mathbb{R}_+$.
\end{cor}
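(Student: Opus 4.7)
The plan is to push the gain function, already defined on generic cycles, down to homotopy classes. Fix a basepoint $x_0$ in the interior of some facet of $P$, regarded as a point of $P_\delta$. For each class $[\gamma]\in \pi_1(P_\delta,x_0)$ I pick any generic loop $\widetilde\gamma$ based at $x_0$ representing $[\gamma]$ and define $\mathfrak{g}([\gamma]):=\mathfrak{g}(\widetilde\gamma)$. To prove the corollary I then need three things: existence of a generic representative in every homotopy class, independence of the value on the choice of representative, and multiplicativity under concatenation.

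For existence, an arbitrary continuous loop based at $x_0$ can be made generic by an arbitrarily small perturbation: transversality to $(d-2)$-faces and disjointness from faces of codimension at least $3$ in $P$ are open and dense conditions on continuous curves in $P_\delta$, and such a perturbation can be realized through a continuous homotopy staying inside $P_\delta$, since $P_\delta$ is obtained from $\partial P$ by removing a closed set. For independence, two generic representatives of the same class are, by definition of $\pi_1(P_\delta)$, connected by a continuous homotopy in $P_\delta$; Lemma~\ref{hom} then yields equality of their gain functions. Hence $\mathfrak{g}([\gamma])$ is well-defined as a function on $\pi_1(P_\delta,x_0)$ with values in $\mathbb{R}_+$.

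For multiplicativity, I will use the fact already noted before Lemma~\ref{iffcond} that the gain function of a concatenation of generic curves satisfies $\mathfrak{g}(\gamma_1\cup\gamma_2)=\mathfrak{g}(\gamma_1)\cdot \mathfrak{g}(\gamma_2)$. The concatenation of two generic loops may fail to be generic only at the basepoint $x_0$, but an arbitrarily small perturbation there (not changing the homotopy class and, by Lemma~\ref{hom}, not changing the gain) yields a generic representative of the product class with the required product gain. The constant loop has an empty supporting primitive path, so it maps to $1$; reversing a loop reverses its supporting primitive path and inverts the gain, so inverses map to reciprocals. Thus $\mathfrak{g}\colon \pi_1(P_\delta)\to \mathbb{R}_+$ is a group homomorphism.

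The only step that uses nontrivial geometry is well-definedness on homotopy classes, which is precisely the content of Lemma~\ref{hom}; once that is accepted, the rest is a formal transversality and bookkeeping argument. The main obstacle I anticipate is purely a matter of care rather than substance, namely verifying that the standard transversality perturbations can indeed be realized as continuous homotopies inside the open manifold $P_\delta$ and at the basepoint $x_0$ without altering homotopy class. With that verification in place, no further geometric input beyond Lemma~\ref{hom} is needed.
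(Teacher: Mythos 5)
Your proposal is correct and follows exactly the route the paper intends: the corollary is stated as an immediate consequence of Lemma~\ref{hom} (well-definedness on homotopy classes) together with the multiplicativity $\mathfrak{g}(\gamma_1\cup\gamma_2)=\mathfrak{g}(\gamma_1)\cdot\mathfrak{g}(\gamma_2)$ noted before Lemma~\ref{iffcond}, which is precisely what you spell out. Your added care about generic representatives and perturbation at the basepoint is a reasonable filling-in of details the paper leaves implicit.
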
 

Since $\mathbb{R}_+$ is a commutative group than we trivially get that $\mathfrak{g}$ also gives us a homomorphism of group $\pi_1(P_\delta)/[\pi_1(P_\delta)]$ (here $[G]$ denotes the commutant of a group $G$) to $\mathbb{R}_+$. This group is isomorphic to a group of homologies $H_1(P_\delta,\mathbb{Z})$ (see \cite{Hat}).

\begin{thm}\label{simpconn}
Given a parallelohedron $P$ with connected $\delta$-surface, if the fundamental group $\pi_1(P_\delta)$ or the group of homologies $H_1(P_\delta, \mathbb Z)$ is trivial then Voronoi's conjecture $\ref{vorcon}$ is true for $P$.
\end{thm}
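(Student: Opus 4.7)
The plan is to observe that the conclusion is almost formal given the lemmas already proved. By Lemma~\ref{iffcond}, the Voronoi conjecture for $P$ is equivalent to the statement that $\mathfrak{g}(\gamma) = 1$ for every generic cycle $\gamma$ on the $\delta$-surface $P_\delta$. So the entire task is to deduce this identity from triviality of $\pi_1(P_\delta)$ or $H_1(P_\delta)$.

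First I would fix a basepoint $x_0$ in the interior of some facet of $P$. Since $P_\delta$ is connected, every generic cycle $\gamma$ can be shifted to pass through $x_0$ by conjugation with a generic path $\alpha$ from $x_0$ to a point of $\gamma$; the multiplicativity $\mathfrak{g}(\gamma_1\cup\gamma_2)=\mathfrak{g}(\gamma_1)\cdot\mathfrak{g}(\gamma_2)$ (noted just before Lemma~\ref{iffcond}) gives $\mathfrak{g}(\alpha\cup\gamma\cup\alpha^{-1})=\mathfrak{g}(\gamma)$, so it suffices to compute $\mathfrak{g}$ on based loops at $x_0$. By the Corollary preceding the theorem, $\mathfrak{g}$ descends via Lemma~\ref{hom} to a well-defined homomorphism $\mathfrak{g}\colon \pi_1(P_\delta, x_0) \to \mathbb{R}_+$.

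Now the two alternatives split trivially. If $\pi_1(P_\delta)$ is trivial, then every loop is null-homotopic, and Lemma~\ref{hom} identifies its gain with the gain of the constant loop at $x_0$, which is $1$ because the supporting primitive path of a loop lying inside a single facet is the one-term sequence $[F]$ with empty product of gain factors. If instead $H_1(P_\delta)$ is trivial while $\pi_1(P_\delta)$ is not, then because the target $\mathbb{R}_+$ is abelian the homomorphism $\mathfrak{g}$ annihilates the commutator subgroup $[\pi_1(P_\delta),\pi_1(P_\delta)]$ and hence factors through the abelianization $\pi_1(P_\delta)/[\pi_1(P_\delta),\pi_1(P_\delta)] \cong H_1(P_\delta)$ (cf.\ \cite{Hat}); vanishing of $H_1(P_\delta)$ then forces $\mathfrak{g}\equiv 1$. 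Applying Lemma~\ref{iffcond} in either case completes the proof.

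There is no substantial obstacle to anticipate here: the whole depth of the theorem has been pushed into Lemma~\ref{local} (the Delone classification of dual $3$-cells giving local consistency of $\mathfrak{g}$ around every primitive $(d-3)$-face) and Lemma~\ref{hom} (homotopy invariance via a general-position perturbation of the homotopy). The only mild care needed is the basepoint reduction above, which is cosmetic: choosing $x_0$ in the interior of a facet makes the ``constant loop has gain $1$'' step immediate, and connectedness of $P_\delta$ (guaranteed by hypothesis, and known to be equivalent to indecomposability of $P$) makes the conjugation trick legitimate.
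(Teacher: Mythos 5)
Your proposal is correct and follows essentially the same route as the paper: both reduce via Lemma~\ref{iffcond} to showing $\mathfrak{g}(\gamma)=1$ for every generic cycle, use the homotopy invariance of Lemma~\ref{hom} (i.e.\ the Corollary that $\mathfrak{g}$ is a homomorphism $\pi_1(P_\delta)\to\mathbb{R}_+$), and exploit that a homomorphism into the abelian group $\mathbb{R}_+$ kills commutators, so triviality of $\pi_1(P_\delta)$ or of $H_1(P_\delta)\cong\pi_1(P_\delta)/[\pi_1(P_\delta)]$ forces $\mathfrak{g}\equiv 1$. The paper phrases both cases uniformly by writing any cycle as a product of commutators, while you split the simply connected case off via the constant loop; this is only a cosmetic difference.
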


\begin{proof}
In both cases an arbitrary generic cycle $\gamma$ can be represented as a product $\gamma=\gamma_1\cdot\ldots\cdot\gamma_k$ where $\gamma_i=a_ib_ia_i^{-1}b_i^{-1}$ is a cycle from commutant $[\pi_1(P_\delta)]$. This is true because in both cases $[\pi_1(P_\delta)]=\pi_1(P_\delta)$ (and also this is the trivial group in the first case). It is clear that $\mathfrak{g}(\gamma_i)=1$ and hence $\mathfrak{g}(\gamma)=1$. Thus by lemma \ref{iffcond} Voronoi's conjecture is true for $P$.
\end{proof}

Now we show how to generalize this theorem in terms of $\pi$-surface of a parallelohedron $P$. This theorem can be generalized in two directions. First we can consider polytopes with non-connected $\delta$- or $\pi$-surfaces. In that case the polytope $P$ can be represented as a direct sum of parallelohedra of smaller dimensions as it was proved in \cite{Ord}, so if for both polytopal summands the theorem \ref{simpconn} (and therefore Voronoi's conjecture) is true then it is true for $P$.

The second way of generalization is to introduce new cycles that have gain function 1. We can use a special family of ``half-belt'' cycles on the $\pi$-surface of $P$ for these purposes.

\begin{defin}
The cycle $\gamma$ on  $P_\pi$ is called {\it half-belt cycle} if its support is a combinatorial path $\gamma'=[F_1,F_2,F_3,F_1]$ such that all three facets $F_i$ belong to the same belt of length $6$.

On the $\delta$-surface $P_\delta$ this cycle corresponds to a path that starts on the facet $F_1$ and ends on the opposite facet $F_1'$ of $P$ and crosses only three parallel primitive faces of codimension 2.
\end{defin}

\begin{lem}
For every half-belt cycle $\gamma$ we have $\mathfrak{g}(\gamma)=1$.
\end{lem}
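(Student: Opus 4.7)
The plan is to combine the central symmetry of $P$ with the consistency of the canonical scaling along a single $6$-belt. Let $F_1, F_2, F_3, F_1', F_2', F_3'$ denote the six facets of the belt in cyclic order, where $F_i'$ is the facet antipodal to $F_i$ under the central symmetry of $P$ (which exists by Minkowski's theorem). The support of the half-belt cycle is $[F_1, F_2, F_3, F_1]$; on $P_\pi$ the final $F_1$ is identified with the antipodal facet $F_1'$, so
$$\mathfrak{g}(\gamma) = \mathfrak{g}(F_1, F_2)\,\mathfrak{g}(F_2, F_3)\,\mathfrak{g}(F_3, F_1').$$

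First I would show that the product of gain functions around the entire belt equals $1$. Project the tiling $\mathcal{T}(P)$ orthogonally along the $(d-2)$-direction of the belt. This yields a face-to-face tiling of $\mathbb{R}^2$ by centrally symmetric hexagons; the image of $P$ is a hexagon $H_0$ whose six edges are the projections of $F_1, \ldots, F_3'$. Because the three normals at each primitive $(d-2)$-face of the belt already span the $2$-plane orthogonal to that face, their unique linear dependence, and hence the gain function $\mathfrak{g}$, is unchanged by the projection. Since every $2$-dimensional parallelohedron is trivially affinely equivalent to a Dirichlet--Voronoi cell, a canonical scaling exists on the $2$-dimensional hexagonal tiling; its restriction to the edges of $H_0$ forces the product of gain values around $H_0$ (equivalently, around the original belt) to equal $1$.

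Second, I would invoke central symmetry of $P$. It permutes the belt by $F_i \leftrightarrow F_i'$ and sends each primitive $(d-2)$-face of the belt to the antipodal one while preserving the local configuration of the three incident facet normals. The gain function, being determined by the unique linear dependence of these three normals, is therefore invariant along paired edges of the belt:
$$\mathfrak{g}(F_1', F_2') = \mathfrak{g}(F_1, F_2), \quad \mathfrak{g}(F_2', F_3') = \mathfrak{g}(F_2, F_3), \quad \mathfrak{g}(F_3', F_1) = \mathfrak{g}(F_3, F_1').$$

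Multiplying these six factors, the full-belt gain factors as $\mathfrak{g}(\gamma)^2$. Combined with the first step we get $\mathfrak{g}(\gamma)^2 = 1$, and since $\mathfrak{g}$ takes values in $\mathbb{R}_+$, we conclude $\mathfrak{g}(\gamma) = 1$. The only step meriting care is the invariance of $\mathfrak{g}$ under the $(d-2)$-projection in the first paragraph, but this is immediate from the definition of the gain function in terms of a linear dependence among three codimension-$2$ normal vectors that already lie in a common $2$-plane.
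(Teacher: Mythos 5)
Your proof is correct, but it takes a more roundabout route than the paper. The paper's own argument is a one-line telescoping: all three primitive $(d-2)$-faces crossed by the half-belt lie in the same $6$-belt, so the three facet normals involved at each of them are, up to sign, the same vectors $\mathbf{n}_1,\mathbf{n}_2,\mathbf{n}_3$, governed by the single linear dependence $\alpha_1\mathbf{n}_1+\alpha_2\mathbf{n}_2+\alpha_3\mathbf{n}_3=\mathbf{0}$; hence
$$\mathfrak{g}(\gamma)=\mathfrak{g}(F_1,F_2)\cdot\mathfrak{g}(F_2,F_3)\cdot\mathfrak{g}(F_3,F_1')=\frac{|\alpha_2|}{|\alpha_1|}\cdot\frac{|\alpha_3|}{|\alpha_2|}\cdot\frac{|\alpha_1|}{|\alpha_3|}=1.$$
You instead prove the full-belt relation by reducing to the planar hexagonal tiling (where a canonical scaling exists since the $2$-dimensional Voronoi conjecture is trivial), and then extract the half-belt statement via central symmetry and positivity of $\mathfrak{g}$ (taking a square root). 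Both your key steps are sound: the central symmetry of $P$ does preserve the tiling and hence the gain values on antipodal pairs of belt faces, and the dependence among the three facet normals at a belt face is indeed the dependence among the corresponding hexagon edge normals, since all these normals lie in the $2$-plane orthogonal to the belt direction. Two remarks. First, your statement that projecting the whole tiling $\mathcal{T}(P)$ along the belt direction ``yields a face-to-face tiling of $\mathbb{R}^2$'' is too strong globally (distant tiles project on top of one another); what you actually need, and what is true, is the local statement that at each of the six belt $(d-2)$-faces the three incident facet normals coincide, up to sign, with the three edge normals of the hexagon $\pi(P)$ at the corresponding vertex, so the unique linear dependences match. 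Second, once you have this observation, the half-belt product already telescopes to $1$ directly, so your symmetry-and-square-root step, while valid, is not needed; the detour through the $2$-dimensional canonical scaling buys nothing beyond what the common linear dependence already gives.
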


\begin{proof}
Let $\alpha_1\mathbf{n}_1+\alpha_2\mathbf{n}_2+\alpha_3\mathbf{n}_3=\mathbf{0}$ be the unique linear dependence of normal vectors $\mathbf{n}_1,\mathbf{n}_2,\mathbf{n}_3$ to facets $F_1,F_2,$ and $F_3$. Then by definition of the gain function $\mathfrak{g}$ we have 
$$\mathfrak{g}(\gamma)=\mathfrak{g}(F_1,F_2)\cdot\mathfrak{g}(F_2,F_3)\cdot \mathfrak{g}(F_3,F_1')=\frac{|\alpha_2|}{|\alpha_1|}\cdot\frac{|\alpha_3|}{|\alpha_2|}\cdot\frac{|\alpha_1|}{|\alpha_3|}=1.$$
\end{proof}

\begin{thm}\label{vorgener}
If the homology group $H_1(P_\pi, \mathbb Q)$ is generated by half-belt cycles of a parallelohedron $P$ then Voronoi's conjecture is true for $P$.
\end{thm}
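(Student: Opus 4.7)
The plan is to adapt the proof of Theorem \ref{simpconn} to the $\pi$-surface $P_\pi$. The first step is to extend the gain function to a homomorphism $\mathfrak{g}:\pi_1(P_\pi)\to\mathbb{R}_+$. Since each facet of $P_\pi$ is the quotient of a pair of antipodal facets of $P$ whose normals are antiparallel, the local linear dependence at a primitive $(d-2)$-face of $P$ coincides up to signs with the dependence at its antipodal face, so the gain ratio $\mathfrak{g}(F,G)$ is well-defined on pairs of adjacent facets in $P_\pi$. Consequently the product gain $\mathfrak{g}(\gamma)$ is well-defined for every generic primitive path on $P_\pi$.

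Next I would replay the argument of Lemma \ref{hom} on $P_\pi$. The two kinds of local moves produced by a generic homotopy, namely insertion or deletion of subsequences $[F,G,F]$ at a primitive $(d-2)$-face and local modifications around a $(d-3)$-face, both preserve the gain: the former trivially, the latter by Lemma \ref{local}. Lemma \ref{local} applies because a small neighborhood of a $(d-3)$-face of $P$ is disjoint from the antipodal neighborhood, so the projection $P_\delta\to P_\pi$ is a local homeomorphism there and the dual $3$-cell classification goes through unchanged. This yields a group homomorphism $\mathfrak{g}:\pi_1(P_\pi)\to\mathbb{R}_+$, which automatically factors through $H_1(P_\pi)$ since $\mathbb{R}_+$ is abelian.

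The previous lemma gives $\mathfrak{g}(\gamma)=1$ on every half-belt cycle. Under either hypothesis of the theorem, these cycles (or their homology classes) generate the relevant group, so $\mathfrak{g}$ is identically $1$ on $\pi_1(P_\pi)$. To conclude I invoke the canonical $2$-to-$1$ covering $p:P_\delta\to P_\pi$: every generic cycle $\gamma$ on the $\delta$-surface projects to a generic cycle $p(\gamma)$ on the $\pi$-surface whose supporting primitive path has the same sequence of local gains, hence $\mathfrak{g}(\gamma)=\mathfrak{g}(p(\gamma))=1$. Lemma \ref{iffcond} then delivers the Voronoi conjecture for $P$.

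The main technical obstacle is ensuring that passing to the antipodal quotient does not break the consistency needed for Lemma \ref{local}. The key observation is that the tetrahedra, octahedra and pyramids constructed in the proof of that lemma are metric invariants associated with $(d-3)$-faces of $P$, unaffected by the global identification, so the entire local analysis around a $(d-3)$-face survives the projection to $P_\pi$.
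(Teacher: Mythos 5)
Your proposal is correct and follows essentially the same route as the paper: extend the gain function to a homomorphism $\pi_1(P_\pi)\to\mathbb{R}_+$ (via the arguments of Lemmas \ref{local} and \ref{hom}), kill it on the half-belt generators, and return to $P_\delta$ to apply Lemma \ref{iffcond}. In fact you spell out the points the paper leaves implicit — the central-symmetry argument for well-definedness of the gain on $P_\pi$ and the passage through the $2$-to-$1$ covering $P_\delta\to P_\pi$ — so the proof is, if anything, more complete than the published one.
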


\begin{proof} At first we show that $\mathfrak{g}$ is well-defined on $H_1(P_\pi, \mathbb Q)$. As above, we have that the gain function $\mathfrak{g}$ is a homomorphism of $\pi_1(P_\pi)$ to $(\mathbb{R}_+, \cdot)$. The group $(\mathbb{R}_+, \cdot)$ is commutative, so the action $\mathfrak{g}$ is well-defined on the commutant of $\pi_1(P_\pi)$ and therefore on $H_1(P_\pi, \mathbb Z)$. Since $(\mathbb{R}_+, \cdot)$ has no torsion $\mathfrak{g}$ is trivial on the torsion subgroup of $H_1(P_\pi, \mathbb Z)$. Thus $\mathfrak{g}$ is well-defined on $H_1(P_\pi, \mathbb Q)$. 

Further since $\mathfrak{g}$ acts trivially on all generators of $H_1(P_\pi, \mathbb Q)$ it is a trivial action on the group as well. Furthermore as $\mathfrak{g}$ also acts trivially on the torsion subgroup of $H_1(P_\pi, \mathbb Z)$ thus it acts trivially of the group $H_1(P_\pi, \mathbb Z)$. Clearly, standard gluing-map $P_{\delta} \longrightarrow P_{\pi}$ holds action of $\mathfrak{g}$ on cycles of $P_{\delta}$. Thus $\mathfrak{g}$ acts trivially of the group $H_1(P_\delta, \mathbb Z)$ and application of lemma \ref{iffcond} finishes the proof.
\end{proof}

\section{Three-dimensional case}\label{3dim}

In this section we will show that conditions of Theorem \ref{vorgener} hold for all three-dimensional parallelohedra.

\begin{exam}
There are five combinatorial types of three-dimensional parallelohedra. Two types of reducible parallelohedra are cube $\mathcal{C}$ and hexagonal prism $\mathcal{P}$. It easy to see that $\mathcal{C}_\delta$ is a collection of six disjoint open disks (corresponding to facets of $\mathcal{C}$) and $\mathcal{C}_\pi$ is a collection of three disjoint open disks. In both cases it is easy to see that both fundamental groups $\pi_1(\mathcal{C}_\delta)$ and $\pi_1(\mathcal{C}_\pi)$ as well as homology groups $H_1(\mathcal{C}_\delta)$ and $H_1(\mathcal{C}_\pi)$ (over $\mathbb{Z}$ or $\mathbb Q$) are trivial and conditions of both theorems \ref{simpconn} (for non-connected case) and \ref{vorgener} are true. For $\mathcal{P}$ situation is a bit more interesting. The surface $\mathcal{P}_\delta$ is a collection of two open disks (bases of prism) and an open strip (the side surface) and $\mathcal{P}_\pi$ is a collection of a disk and a M\"{o}bius strip. In this case 
$$\pi_1(\mathcal{P}_\delta)\cong\pi_1(\mathcal{P}_\pi)\cong H_1(\mathcal{P}_\delta,\mathbb{Z})\cong H_1(\mathcal{P}_\pi,\mathbb{Z})\cong \mathbb{Z}.$$
It is easy to find generators for all these groups. For both fundamental and homology group of $\delta$-surface the generator is the cycle represented by single belt-cycle on the side surface of $\mathcal{P}$. For $\pi$-surface the generator is represented by unique half-belt cycle of $\mathcal{P}$.

Three types of irreducible parallelohedra are rhombic dodecahedron $\mathcal{R}$, elongated dodecahedron $\mathcal{E}$, and truncated octahedron $\mathcal{O}$ (from left to right in Figure~\ref{3d}).
\begin{figure}[!ht]
\begin{center}
\includegraphics[scale=0.9]{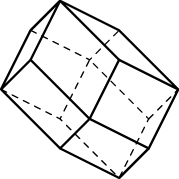}
\hskip 1cm
\includegraphics[scale=0.9]{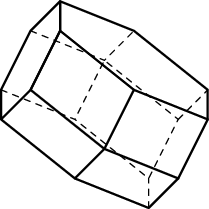}
\hskip 1cm
\includegraphics[scale=0.9]{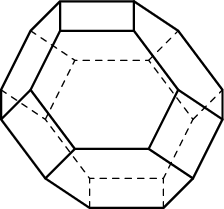}
\end{center}
\caption{Irreducible parallelohedra in $\mathbb{R}^3$.}
\label{3d}
\end{figure}
The rhombic dodecahedron and the truncated octahedron are $2$-primitive (the Zhitomirskii case \cite{Zhit}) since all edges of these polytopes generate belts of length 6. For any such parallelohedron its $\delta$-surface is just the surface of parallelohedron itself. So, $\mathcal{R}_\delta$ and $\mathcal{O}_\delta$ are homeomorphic to the sphere $\mathbb{S}^2$, and $\mathcal{R}_\pi$ and $\mathcal{O}_\pi$ are homeomorphic to the projective plane $\mathbb{RP}^2$. So, groups $\pi_1(\mathcal{R}_\delta)$, $H_1(\mathcal{R}_\delta)$, $\pi_1(\mathcal{O}_\delta)$ and $H_1(\mathcal{O}_\delta)$ are trivial and we can apply theorem \ref{simpconn} for these polytopes. The groups $H_1(\mathcal{R}_\pi,\mathbb{Q})$ and $H_1(\mathcal{O}_\pi,\mathbb{Q})$ are trivial. Thus  for this case 
we can also apply Theorem~\ref{vorgener}.

The most interesting case is the elongated dodecahedron $\mathcal{E}$. The manifold $\mathcal{E}_\delta$ is a sphere with four cuts. (The cuts correspond to four vertical edges of the middle polytope in Figure~\ref{3d}.) The fundamental group $\pi_1(\mathcal{E}_\delta)$ is the free group with 3 generators. We can take three cycles consisting of two half-belts each going around some deleted edge as independent generators. Thus, the homology group $H_1(\mathcal{E}_\delta,\mathbb{Z})$ is isomorphic to $\mathbb{Z}^3$.

The manifold $\mathcal{E}_\pi$ is the real projective plane with two cuts, or equivalently, the M\"obius strip with one cut. One possible triangulation of (closure of) this surface on the figure \ref{pict:rp2}, the cuts are shaded with gray, and opposite points of the boundary circle are glued together as usual.

\begin{figure}[!ht]
\begin{center}
\includegraphics[height=5cm]{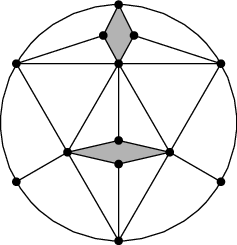}
\caption{A triangulation of $\mathcal{E}_\pi$.}
\label{pict:rp2}
\end{center}
\end{figure}

Fundamental group $\pi_1(\mathcal{E}_\pi)$ is generated by two cycles around cuts and arbitrary half-belt cycle (one can use a cycle represented by half-circle from the figure \ref{pict:rp2} for that) but these three cycles are not independent. The group of one-dimensional homologies also can be found easily using that Betti number $h_1$ of $\mathcal{E}_\pi$ is $2$ (to check this one can calculate the Euler characteristic of $\mathcal{E}_\pi$ using figure \ref{pict:rp2}). For homology groups we have $H_1(\mathcal{E}_\pi,\mathbb{Z})=\mathbb{Z}^2\times T$, where $T$ is the torsion part of homology group and does not affect on existence of canonical scaling. And $H_1(\mathcal{E}_\pi,\mathbb{Q})$ is just $\mathbb{Q}^2$. In both cases generators of non-torsion part can be chosen as one half-belt cycle and composition of two half belt cycles around on of the cuts (or generating cycle of corresponding M\"obius strip and cycle around cut on the M\"obius strip). Thus Theorem \ref{vorgener} can be applied to this case as well.
\end{exam}

\section{Acknowledgments}

Authors would like to thank Professor Nikolai Dolbilin and Professor Robert Erdahl for fruitful discussions and helpful comments and remarks.

The first author is supported by RScF project 14-11-00414.

\end{document}